\newcommand{\RR}{\mathbb{R}}
\newcommand{\NN}{\mathbb{N}}
\newcommand{\cB}{\mathcal{B}}
\renewcommand{\epsilon}{\varepsilon}
\renewcommand{\phi}{\varphi}
\newcommand{\ER}{\overline{\mathbb{R}}}
\newtheorem{theorem}{Theorem} 
\newtheorem{lemma}{Lemma}
\newtheorem{proposition}{Proposition}
\begin{document} 

\title{Non-existence of polar factorisations and polar inclusion of a 
vector-valued mapping} 
\author{R.J. Douglas\thanks{Institute 
of Mathematical and Physical Sciences, 
Aberystwyth University, Aberystwyth SY23 3BZ,   
U.K. {\em E-mail address} {\tt rsd@aber.ac.uk}}}  

\date{}

\maketitle 

{\em To appear in the International Journal 
of Pure and Applied Mathematics}, (IJPAM), {\bf 41}, no. 3, (2007), 
363-374.

\begin{abstract}
This paper proves some results concerning the polar
factorisation of an integrable vector-valued function $u$ into the composition
$u = u^{\#} \circ s$, where $u^{\#} = \nabla \psi$ 
almost everywhere 
for some convex function $\psi$, 
and $s$ is a measure-preserving mapping.
Not every integrable function has a polar factorisation; we extend the
class of counterexamples.
We introduce a generalisation: $u$ has a polar inclusion if 
$u(x) \in \partial \psi (y)$ for almost every pair $(x,y)$ with 
respect to a measure-preserving plan. Given a regularity assumption,
we show that such measure-preserving plans are exactly the minimisers
of a Monge-Kantorovich optimisation problem. 
\end{abstract}

{\em Keywords}: Polar factorisation, Monotone rearrangement, 
Measure-preserving mappings, Monge-Kantorovich problem. 

{\em AMS classification}: 28A50, 28D05, 46E30

\section{Introduction}
\label{s:1}

A vector-valued function has a {\em polar factorisation} if it can be
written as the composition of its {\em monotone rearrangement}, which is
equal almost everywhere to the gradient of a convex function, with a
measure-preserving mapping. 
This concept was introduced by Brenier \cite{YB87,YB91}, 
who proved existence and uniqueness of the monotone rearrangement on
sufficiently regular domains, and existence and uniqueness of the polar
factorisation subject to a further ``nondegeneracy'' restriction on the
function.
Burton and Douglas \cite{BD98,BD03}  investigated the consequences of relaxing
his assumptions by studying polar factorisations on general sets of finite
Lebesgue measure.
While the monotone rearrangement continues to exist and be unique, as proved
by McCann \cite{RJM}, they demonstrated that there is
a class of functions  which have no polar 
factorisation (that is there is no measure-preserving mapping 
satisfying the definition). In this paper we extend this class of 
functions. A natural question to ask is whether a weaker version of the 
concept holds for functions which do not have a polar factorisation. 
We introduce {\em polar inclusion} of an integrable function, where the 
gradient of the convex function is replaced by the subdifferential, and
the inclusion is required to hold for almost every pair of points with 
respect to a product measure with prescribed marginals, that is a 
measure-preserving plan. It is easy to see that a polar factorisation 
induces a polar inclusion. We introduce a Monge-Kantorovich problem 
suitable for the study of polar inclusions. Given a regularity 
assumption on the cost function (which depends on the integrable
function), minimising measure-preserving plans are exactly those 
that arise in polar inclusions (of the integrable function).   

In this paper, given an integrable function $u : X \rightarrow \RR^n$, and
a set $Y \subset \RR^n$ of finite positive Lebesgue measure, we say that $u$
has a {\em polar factorisation through} $Y$ if $u = u^{\#} \circ s$, where
$u^{\#} = \nabla \psi$ almost everywhere
in $Y$ for a convex function $\psi$, 
and $s: X \rightarrow Y$ is a measure-preserving mapping. 
Existence and uniqueness of a polar factorisation refers to 
existence and uniqueness of the measure-preserving mapping $s$.
The restriction on $X$ is not severe; we only require that $(X, \mu)$ is a
complete measure space with the same measure-theoretic structure as an
interval of length $\mu (X)$ equipped with Lebesgue measure. 
(We give precise definitions below.) 
We say that $u$ has a {\em polar inclusion} through $Y$ if 
$u(x) \in \partial \psi (y)$ for $\pi$ almost every $(x,y)$, where 
$\partial \psi (y)$ denotes the subdifferential of $\psi$ at $y$, 
and $\pi$ is a measure on $X \times Y$ with prescribed marginals. 
If a polar factorisation exists for some measure-preserving mapping 
$s: X \rightarrow Y$, then the push-forward measure 
$(id \times s)_{\#}\mu$ is a measure-preserving plan for which 
$u$ has a polar inclusion.

For a given integrable $u$, previous work \cite{BD98,BD03} has 
established that there is uniqueness exactly when $u^{\#}$ is almost
injective (injective on a set of full measure) and general 
existence when $u^{\#}$ is almost injective on the complement 
of its level sets of positive size. Burton and Douglas 
\cite[Theorem 2]{BD03} established that if $u^{\#}$ does not 
satisfy the hypothesis of the general existence theorem, one can
construct a rearrangement $\hat{u}$ of $u^{\#}$ which is almost 
injective on the complement of its level sets of positive size, and
$\hat{u}$ does not have a polar factorisation. We extend this result in
Theorems 1 and 2. The key intermediate result is that for any set of 
full measure, we can find a level set of $u^{\#}$, not of positive 
size, which has uncountable intersection with the set of full measure. 
For any rearrangement ${\tilde u}$ of $u^{\#}$ satisfying 
the condition that all its level sets are countable except for the 
level sets of positive size, existence of a polar factorisation of
${\tilde u}$ would yield a contradiction. One way to think 
about the existence/uniqueness question is to consider 
if a measure-preserving mapping can 
be constructed that maps level sets of $u$ to corresponding level 
sets of $u^{\#}$. In the case of the uniqueness and general 
existence results, the non-empty level sets of $u^{\#}$ are 
singletons (ignoring level sets of positive size), therefore the
mapping is fixed. In the nonexistence results we would be 
mapping a countable set onto an uncountable set, which is a contradiction. 

Having extended the class of functions known to have no polar 
factorisation, we introduce a relaxed version of the concept,
polar inclusion. In an analogous manner to the generalisation 
of a Monge mass transfer problem to a Monge-Kantorovich problem,
we introduce measure-preserving plans, measures on $X \times Y$ 
with $\mu$ and $n$-dimensional Lebesgue measure $\lambda_n$ 
as their marginals. We study a Monge-Kantorovich problem with 
cost function $c(x,y) = |u(x) -y|^2/2$ in Theorem 3, and 
show that any measure-preserving
plan arising from a polar inclusion is a minimiser; conversely, if
$c$ is assumed to be lower semicontinuous, any minimiser yields 
a polar inclusion. This result can be viewed as a generalisation of
\cite[Corollary 1]{BD03}. If a minimiser is not supported on the graph 
of a single mapping, this yields a polar inclusion without a corresponding 
polar factorisation. Gangbo and McCann \cite{WGRJM} and Plakhov 
\cite{PLA} have studied Monge-Kantorovich problems for specific costs 
where the minimisers are of the type described above. It is a plausible
conjecture that polar inclusions exist for every integrable 
$u$. Such a result would fit well with the theory of geodesics 
in the space of measure-preserving mappings (see 
Brenier \cite{YB89,YB99}, Ambrosio and Figalli \cite{AF}) where the
problem cannot be solved in general within the set of measure-preserving 
mappings, it being necessary to consider the larger set of
measure-preserving plans. 

\subsection{Definitions and notation}

{\sc Definition}. Let $(X, \mu)$ and $(Y, \nu)$ 
be finite positive measure spaces with $\mu (X) = 
\nu (Y)$. Two vector-valued functions $f \in L^1 (X, \mu, \RR^n)$
and $g \in L^1 (Y, \nu, \RR^n)$ are {\em rearrangements} of
each other (or {\em equimeasurable}) if 
$$ \mu (f^{-1} (B)) = \nu (g^{-1}(B)) \mbox{ for every $B \in 
\cB (\RR^n)$,} $$
where $\cB (\RR^n)$ denotes the Borel field of $\RR^n$. 
Equivalent formulations can be found in Douglas \cite{RJD}.

\vspace{3mm}

\noindent {\sc Definitions}. A {\em measure-preserving mapping}
from a finite positive measure space $(X, \mu )$ to a 
positive measure space $(Y, \nu)$ with $\mu (X) = \nu (Y)$ 
is a mapping $s: X \rightarrow Y$ such that for each 
$\nu$-measurable set $A \subset Y$, $\mu (s^{-1} (A))= \nu (A)$.

We will be considering the special case of $(X, \mu)$ complete,
$Y \subset \RR^n$
and $\nu$ being $n$-dimensional Lebesgue measure. The 
$\nu$-measurable sets will be the Borel-measurable sets; 
the same measure-preserving properties can then be deduced  
for the Lebesgue-measurable sets.

\vspace{3mm}

\noindent
A finite complete measure space $(X, \mu)$ is a {\em measure-interval}
if there exists a measure-preserving transformation from $(X, \mu)$ to
$[0, \mu (X)]$ with Lebesgue measure (on the Lebesgue sets). 
(Measure-preserving transformations are defined in \cite{BD03}.)
We recall that any Polish space, that is any complete separable metric space, 
equipped with a finite
nonatomic Borel measure, is a measure interval.

\vspace{3mm}

Throughout this paper we will denote $n$-dimensional Lebesgue
measure by $\lambda_n$, and the extended real numbers, 
that is the set $\RR \cup \{-\infty, \infty\}$, by $\ER$. 

\vspace{3mm}

\noindent {\sc Definition}. Let $u \in L^1 (X, \mu, \RR^n)$,
where $(X, \mu)$ is a measure-interval. Let 
Lebesgue measurable $Y \subset \RR^n$ be such that 
$\lambda_n (Y) = \mu (X)$. The {\em monotone rearrangement
of $u$ on $Y$} is the unique function $u^{\#} : Y 
\rightarrow \RR^n$ that is 
a rearrangement of $u$, and satisfies $u^{\#} = \nabla \psi $
almost everywhere in $Y$ for some proper lower semicontinuous convex 
function $\psi : \RR^n \rightarrow \ER$.
(A $\ER$-valued function is called {\em proper} if it is not identically
$\infty$, and nowhere takes the value $- \infty$.)
  
The existence and uniqueness of the monotone rearrangement follows
from the main result of McCann \cite{RJM}. 
It is unique in the sense that if $\phi : \RR^n \rightarrow 
\ER$ is another convex function, and $\nabla \phi$ (as a function 
defined on $Y$) is a rearrangement of $u$, then $\nabla \phi (y) 
= \nabla \psi (y)$ for almost every $y \in Y$. 

\vspace{3mm}

\noindent {\sc Definition}.
Let $u \in L^1 (X, \mu, \RR^n)$ where $(X, \mu)$ is a measure-interval.
Let Lebesgue measurable $Y \subset \RR^n$ be such that $\lambda_n(Y)=\mu(X)$,
and let $u^{\#}$ denote the monotone rearrangement of $u$ on $Y$.
We say $u$ has a {\em polar factorisation through} $Y$ if there exists a
measure-preserving mapping $s$ from $(X,\mu)$ to $(Y,\lambda_n)$ such that
$u = u^{\#} \circ s$ almost everywhere. 

\vspace{3mm}

\noindent {\sc Definitions}. A mapping $f: U \rightarrow V$, where 
$(U, \mu)$ is a finite positive measure space, is {\em almost 
injective} if there exists a set $U_0 \subset U$ such that 
$f$ restricted to $U_0$ is injective, and $\mu (U \verb+\+ U_0)
= 0$. 

We say that $f$ is {\em almost 
countable to one} if there exists a set $U_0$ of full measure 
such that the intersection of any level set of $f$ with 
$U_0$ is countable. 

For $m \in \NN$, we say that 
$f$  is {\em almost m to 1} if there exists
a set $U_0$ of full measure
such that the intersection of any level set of $f$ with $U_0$ 
(whenever it is nonempty) has exactly $m$ elements.  
In this notation, almost injective functions are called almost one to one. 

\vspace{3mm}

\noindent {\sc Definition}. Let $(X, \mu)$ be a measure interval, and
let Lebesgue measurable $Y \subset \RR^n$ satisfy $\lambda_n (Y) = \mu (X)$. 
We say that 
a measure $\pi$ on $X \times Y$ is a {\em measure-preserving plan}
if $\mu (A) = \pi (A \times Y)$ and $\lambda_n (B) = \pi (X \times B)$
for all (respectively $\mu$ and $\lambda_n$) measurable subsets 
$A \subset X$, $B \subset Y$. 

We denote the set of measure-preserving plans by $\Pi (\mu, \lambda_n)$. 

\vspace{3mm}

\noindent {\sc Definition}. Let $u \in L^1 (X, \mu, \RR^n)$ where 
$(X, \mu)$ is a measure interval, 
and let Lebesgue measurable $Y \subset \RR^n$ satisfy 
$\lambda_n (Y) = \mu (X)$. We say that $u$ has a {\em polar  
inclusion through} $Y$ if there exists a (proper lower semicontinuous) 
convex function $\psi: \RR^n \rightarrow \ER$ and a measure-preserving
plan $\pi \in \Pi (\mu, \lambda_n)$ such that $u(x) \in \partial 
\psi (y)$ for $\pi$ a.e. $(x,y)$, where $\partial \psi (y)$ denotes 
the subdifferential of $\psi$ at $y$. 

\subsection{Statements of results}
Our main results are Theorems 1,2 and 3 below; the proofs of the first 
two are given in Section 2, the last in Section 3.

\begin{theorem}
\label{thm:1}
Let integrable $u^{\#}: Y \rightarrow 
\RR^n$ be the restriction of the gradient of a proper 
lower semicontinuous convex function to a set $Y \subset \RR^n$
of finite positive Lebesgue measure, and suppose that 
$u^{\#}$ restricted to the complement of its level sets 
of positive measure is not almost injective. Let 
$(X, \mu)$ be a measure-interval satisfying $\mu (X) =
\lambda_n (Y)$.  
Suppose 
$u : X \rightarrow \RR^n$ is a rearrangement of 
$u^{\#}$ which is {\em almost countable to 1} on the complement 
of its level sets of positive measure.  
Then $u$ does not have a polar 
factorisation (through $Y$).
\end{theorem}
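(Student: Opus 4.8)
The plan is to argue by contradiction, exploiting the tension between the cardinality of level sets of $u$ and the level sets of $u^{\#}$. Suppose $u$ has a polar factorisation through $Y$, so that $u = u^{\#} \circ s$ almost everywhere for a measure-preserving mapping $s : X \to Y$. The first step is to record the structural information we have about $u^{\#}$: since $u^{\#}$ is the restriction of $\nabla \psi$ to $Y$ and is \emph{not} almost injective on the complement of its level sets of positive measure, there must exist a value $v \in \RR^n$ whose level set $L = (u^{\#})^{-1}(\{v\})$ has zero Lebesgue measure yet is uncountable (an uncountable level set of zero measure). The existence of such a value is the key intermediate result flagged in the introduction; I would establish it by a counting/measure argument: if every zero-measure level set were countable, then removing a full-measure set on which $u^{\#}$ is injective would be possible (the countably many zero-measure level sets contribute a measure-zero set once we take one representative from each, and the positive-measure level sets are excluded by hypothesis), contradicting the failure of almost injectivity.

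The second step transfers this to $u$. Let $X_0 \subset X$ be a set of full measure on which $u = u^{\#} \circ s$ holds and on which $s$ is defined; intersecting with the full-measure set witnessing that $u$ is almost countable to $1$ on the complement of its positive-measure level sets, we still have a set of full measure, call it $X_1$. Now consider $s^{-1}(L) \cap X_1$. Because $s$ is measure-preserving and $\lambda_n(L) = 0$, the set $s^{-1}(L)$ has $\mu$-measure zero — but this does \emph{not} immediately give a contradiction, since a measure-zero set can be uncountable. The real point is different: on $s^{-1}(L) \cap X_1$ we have $u(x) = u^{\#}(s(x)) = v$, so this set is contained in the level set $u^{-1}(\{v\})$. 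Since $\lambda_n(L)=0$, the value $v$ is \emph{not} attained on a positive-measure level set of $u$ (equimeasurability: $\mu(u^{-1}(\{v\})) = \lambda_n((u^{\#})^{-1}(\{v\})) = 0$), so by the almost-countable-to-$1$ hypothesis $u^{-1}(\{v\}) \cap X_1$ is countable, hence $s^{-1}(L) \cap X_1$ is countable.

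The third step derives the contradiction from a cardinality count on the other side. Here I would need to use the fact that $s$ is, up to measure zero, surjective onto $L$ in a strong enough sense — specifically that $s^{-1}(\{y\})$ is nonempty for uncountably many $y \in L$. One clean way: pick any point $y \in L$; since $u^{\#}(y) = v$ and $u$ is a rearrangement of $u^{\#}$ realised through $s$, for each $y \in L$ there is at least one $x \in X$ with $s(x) = y$ and $u(x) = v$ — but establishing this requires care, because a measure-preserving map need not be pointwise surjective. The cleaner route, which I would adopt, is to work with the pushforward and the measure-theoretic structure: since $(X,\mu)$ is a measure-interval, the hypothesis already forces enough; alternatively, restrict to $Y$ and use that $L$ being uncountable of zero measure, while $s^{-1}(L)\cap X_1$ is countable, means $s$ restricted to $X_1$ misses all but countably many points of $L$, yet $s$ is measure-preserving so it cannot systematically avoid a set in a way that is detectable only through measure — and indeed measure does not see the difference, which is exactly why this needs the pointwise/cardinality hypothesis rather than a measure hypothesis.

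I expect the main obstacle to be precisely this last point: converting ``$s$ is measure-preserving and $L$ has zero measure'' together with ``$L$ is uncountable'' into a genuine contradiction with ``$s^{-1}(L)\cap X_1$ is countable''. The resolution must go through the other level sets: the level sets of $u^{\#}$ partition $Y$, and a measure-preserving $s$ must, modulo $\mu$-null sets, send the partition of $X$ by level sets of $u$ onto the partition of $Y$ by level sets of $u^{\#}$ in a measure-matching way. On a positive-measure level set of $u^{\#}$ there is room to do this; but the union of the zero-measure level sets of $u^{\#}$ is a $\lambda_n$-null set $N \subset Y$ whose preimage $s^{-1}(N)$ is $\mu$-null, and on $X \setminus s^{-1}(N)$ the map $s$ lands in positive-measure level sets only — meaning $u$ takes, on a full-measure set, only values attained on positive-measure level sets. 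But $u$ is a rearrangement of $u^{\#}$, so $u$ must also take the value $v$ (on a set of measure zero, but a nonempty, indeed by the almost-countable-to-$1$ structure we can arrange it to be, say, a single point or a controlled countable set) — and matching $u^{-1}(\{v\}) \cap X_1$, which is countable, against the requirement imposed by equimeasurability across \emph{all} the uncountably many zero-measure level sets simultaneously forces a contradiction once one sums cardinalities: uncountably many nonempty fibres cannot be squeezed into a countable set. I would formalise this by choosing one representative $x_y \in s^{-1}(\{y\})$ for each $y$ in the uncountable set $L$ (possible since, after discarding a $\mu$-null set, the fibres over $L$ partition a subset of $X$ and their union must be uncountable because $s$ restricted to it is injective into $L$), giving an uncountable subset of $u^{-1}(\{v\})$, contradicting that $u^{-1}(\{v\}) \cap X_1$ is countable. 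This completes the proof.
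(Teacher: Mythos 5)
Your overall strategy (contradiction via a cardinality mismatch between the countable fibres of $u$ and an uncountable zero-measure level set of $u^{\#}$) is the right shape and matches the paper's, but both of the steps you yourself flag as delicate fail as you propose to carry them out. First, your argument for the existence of an uncountable zero-measure level set of $u^{\#}$ is invalid: if every zero-measure level set were countable, removing all but one representative from each deletes a countable set \emph{per level set}, but there are in general uncountably many such level sets, and an uncountable union of null sets need not be null --- indeed the union of all the zero-measure level sets is the whole complement $Y_0$ of the positive-measure level sets, which has positive measure in the non-trivial case. A map can be everywhere $2$-to-$1$ (hence not almost injective) with every level set finite, so non-almost-injectivity alone yields nothing. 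The paper's Lemma 1 obtains the uncountable level set only by exploiting that $u^{\#}$ is the gradient of a convex function: failure of almost injectivity forces a positive-measure set of points to lie on level sets that are convex sets of dimension $k \geq 1$, and a Fubini/disintegration argument then produces a single level set meeting any prescribed full-measure set in a set of positive $\lambda_k$-measure, hence uncountably. Your proposal nowhere uses convexity, which is a sure sign the key idea is missing.

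Second, you correctly identify the surjectivity obstacle --- you need uncountably many $y$ in the bad level set $L$ to have a preimage under $s$ inside the good full-measure set $\tilde X$ --- but you never overcome it. Since $\lambda_n(L)=0$, nothing prevents $s$ from hitting only countably many points of $L$, so ``choosing one representative $x_y \in s^{-1}(\{y\})$ for each $y \in L$'' is simply not available, and your fallback computation rests on the false premise that the union $N$ of the zero-measure level sets of $u^{\#}$ is $\lambda_n$-null (it is $Y_0$, of positive measure), so the claim that $s$ lands almost everywhere in positive-measure level sets collapses. The paper's resolution is to prove Lemma 1 in the stronger form quantified over all full-measure subsets $\tilde Y \subset Y_0$, and then apply it with $\tilde Y = s(\tilde X)$: the uncountably many points of a single zero-measure level set so obtained already lie in the image of $s|_{\tilde X}$, each has a preimage $\beta_i \in \tilde X$, the $\beta_i$ are distinct and all lie in one level set of $u$, contradicting almost countability-to-one. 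Without that quantified form of the lemma the argument cannot be closed.
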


\vspace{3mm}

The following extension of Theorem 1 yields further examples 
of integrable functions which do not have polar 
factorisations. 

\begin{theorem}
\label{thm:2}
Let $Y \subset \RR^n$ be a set 
of finite positive Lebesgue measure, and let $(X, \mu)$ be 
a measure-interval satisfying $\mu (X) = \lambda_n (Y)$.  
Let $u^{\#}$ be the monotone 
rearrangement through $Y \subset \RR^n$ of an integrable 
function $u: X \rightarrow \RR^n $, 
and suppose there is a Borel set $B \subset \RR^n$
such that 

\noindent (i) $u^{\#}$ restricted to $(u^{\#})^{-1}(B)$ is not almost 
injective on the complement of its level sets of positive measure; 

\noindent (ii) $u$ restricted to $u^{-1}(B)$ is almost 
countable to 1 on the complement of its level sets of positive 
measure. 

Then $u$ does not have a polar factorisation (through $Y$).  
\end{theorem}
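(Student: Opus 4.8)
The plan is to derive a contradiction from an assumed polar factorisation of $u$ by restricting all of the data to the part on which $u$ (equivalently $u^{\#}$) takes its values in $B$, and then invoking Theorem 1. Suppose $u = u^{\#} \circ s$ almost everywhere for a measure-preserving mapping $s : X \to Y$. Set $\tilde{X} := u^{-1}(B)$ and $\tilde{Y} := (u^{\#})^{-1}(B)$ (both Lebesgue, resp. $\mu$-, measurable since $B$ is Borel), equipped with the restrictions of $\mu$ and $\lambda_n$. Since $B$ is Borel and $u$, $u^{\#}$ are rearrangements, $\mu(\tilde{X}) = \lambda_n(\tilde{Y})$; hypothesis (i) forces this common value to be positive, since on a null measure space every function is almost injective. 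A measurable subset of positive measure of a measure-interval, with the restricted measure, is again a measure-interval, so $(\tilde{X},\mu)$ may serve as the ambient space in Theorem 1, and $\tilde Y$ is a set of finite positive Lebesgue measure with $\lambda_n(\tilde Y)=\mu(\tilde X)$. Moreover $u^{\#}|_{\tilde{Y}}$ is the restriction to $\tilde{Y} \subset \RR^n$ of the gradient of the same proper lower semicontinuous convex function; applying the rearrangement identity to the Borel sets $C \cap B$ shows $u|_{\tilde{X}}$ and $u^{\#}|_{\tilde{Y}}$ are rearrangements of one another, whence by uniqueness of the monotone rearrangement $u^{\#}|_{\tilde{Y}}$ is the monotone rearrangement of $u|_{\tilde{X}}$ through $\tilde{Y}$. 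Finally, hypotheses (i) and (ii) are, word for word, the hypotheses that Theorem 1 asks of $u^{\#}|_{\tilde{Y}}$ and of $u|_{\tilde{X}}$.

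The remaining, and main, step is to show that the assumed polar factorisation of $u$ restricts to a polar factorisation of $u|_{\tilde{X}}$ through $\tilde{Y}$. Fix a $\mu$-null set $N$ outside of which $s$ is defined and $u = u^{\#} \circ s$ holds. For $x \in X \setminus N$ one has $x \in \tilde{X} \iff u(x) \in B \iff u^{\#}(s(x)) \in B \iff s(x) \in \tilde{Y}$, so $\tilde{X}$ and $s^{-1}(\tilde{Y})$ differ by a subset of $N$, and $s$ maps $\tilde{X} \setminus N$ into $\tilde{Y}$. Let $\tilde{s}$ be the restriction of $s$ to $\tilde{X} \setminus N$. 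For measurable $A \subseteq \tilde{Y}$ we have $\tilde{s}^{-1}(A) = s^{-1}(A) \setminus N$, because $s^{-1}(A) \subseteq s^{-1}(\tilde{Y}) \subseteq \tilde{X} \cup N$; hence $\mu(\tilde{s}^{-1}(A)) = \mu(s^{-1}(A)) = \lambda_n(A)$, so $\tilde{s}$ is a measure-preserving mapping from $(\tilde{X},\mu)$ to $(\tilde{Y},\lambda_n)$. And for $x \in \tilde{X} \setminus N$, $u|_{\tilde{X}}(x) = u(x) = u^{\#}(s(x)) = u^{\#}|_{\tilde{Y}}(\tilde{s}(x))$, so $u|_{\tilde{X}} = u^{\#}|_{\tilde{Y}} \circ \tilde{s}$ almost everywhere. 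Since $u^{\#}|_{\tilde{Y}}$ is the monotone rearrangement of $u|_{\tilde{X}}$ through $\tilde{Y}$, this says exactly that $u|_{\tilde{X}}$ has a polar factorisation through $\tilde{Y}$.

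This contradicts Theorem 1 applied to $\tilde{X}$, $\tilde{Y}$, $u|_{\tilde{X}}$ and $u^{\#}|_{\tilde{Y}}$ — whose hypotheses were verified in the first paragraph — and the contradiction shows that $u$ has no polar factorisation through $Y$. I expect the delicate point to be the one carried out in the second paragraph: because $u = u^{\#} \circ s$ holds only up to a null set, the identifications $\tilde{X} = s^{-1}(\tilde{Y})$ and $s(\tilde{X}) \subseteq \tilde{Y}$ are valid only modulo $N$, and one must keep track of $N$ carefully to be sure that $\tilde{s}$ really is measure-preserving \emph{onto} $\tilde{Y}$ and that the composition identity survives the restriction. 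The remainder — the measure equality $\mu(\tilde{X}) = \lambda_n(\tilde{Y})$, the measure-interval property of $\tilde{X}$, and the identification of the monotone rearrangement of $u|_{\tilde{X}}$ — is routine bookkeeping.
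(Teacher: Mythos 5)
Your proposal is correct and follows essentially the same route as the paper: restrict $u$, $u^{\#}$ and the putative measure-preserving factor $s$ to $u^{-1}(B)$ and $(u^{\#})^{-1}(B)$, check that the restricted data satisfy the hypotheses of Theorem 1, and derive a contradiction. In fact you supply more detail than the paper does on the null-set bookkeeping needed to see that the restriction of $s$ is measure-preserving onto $(u^{\#})^{-1}(B)$ (the paper dispatches this with ``modifying $s$ on a set of measure zero if necessary''), so nothing is missing.
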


\vspace{3mm}

\begin{theorem}
\label{thm:3}
Let $X$ be a Polish space, let $(X, \mu)$ be a measure-interval, 
and let $Y \subset \RR^n$ be a Lebesgue measurable set such that $\mu (X) 
= \lambda_n (Y)$ and $\int_Y |y|^2 d \lambda_n < \infty$. Let 
$u \in L^2 (X, \mu, \RR^n)$. Define $c: X \times Y \rightarrow 
[0, \infty) \cup \{ + \infty \}$ by $c(x,y) = |u(x) - y|^2/2$, 
and for $\gamma \in \Pi (\mu, \lambda_n)$ write 
$$ I (\gamma) = \int_{X \times Y} c(x,y) d \gamma (x,y). $$ 
\noindent (i) Suppose that $u(x) \in \partial \psi(y)$ for $\pi$ a.e. 
$(x,y)$, where $\pi \in \Pi (\mu, \lambda_n)$, and $\psi : \RR^n 
\rightarrow \ER$ is a convex lower semicontinuous function. Then  $\pi$ attains 
$\inf \{ I(\gamma) : \gamma \in \Pi(\mu, \lambda_n)\}$. 

\noindent (ii) Suppose that $c$ is lower semicontinuous, and let 
$\pi \in \Pi (\mu, \lambda_n)$ be a minimiser of $I (\gamma)$
over $\gamma \in \Pi (\mu, \lambda_n)$. Then there exists 
a convex lower semicontinuous function $\psi : \RR^n \rightarrow \ER$ 
such that 
$u(x) \in \partial \psi (y)$ for $\pi$ a.e. $(x,y)$. 
\end{theorem}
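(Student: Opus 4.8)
The plan is to exploit the special structure of the quadratic cost. Since $u \in L^2(X,\mu,\RR^n)$ and $\int_Y |y|^2\,d\lambda_n < \infty$, the cost $c(x,y) = \tfrac12|u(x)|^2 - u(x)\cdot y + \tfrac12|y|^2$ is dominated by $|u(x)|^2 + |y|^2$, hence $\gamma$-integrable for every $\gamma \in \Pi(\mu,\lambda_n)$; moreover $\int \tfrac12|u(x)|^2\,d\gamma$ and $\int \tfrac12|y|^2\,d\gamma$ depend only on the marginals and are therefore constants independent of $\gamma$. Consequently $I(\gamma) = K - \int_{X\times Y} u(x)\cdot y\,d\gamma$ for a fixed finite constant $K$, so minimising $I$ over $\Pi(\mu,\lambda_n)$ is equivalent to maximising $\gamma \mapsto \int u(x)\cdot y\,d\gamma$. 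Note also that $|y| \in L^1(Y,\lambda_n)$ by Cauchy--Schwarz (as $\lambda_n(Y)<\infty$) and $u \in L^1(X,\mu,\RR^n)$.

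For part (i), I would use Fenchel's inequality: $p \in \partial\psi(y)$ is equivalent to $\psi(y) + \psi^*(p) = p\cdot y$, where the conjugate $\psi^*$ is again proper, lower semicontinuous and convex, and in general $\psi(y) + \psi^*(p) \ge p\cdot y$. Since $\psi$ and $\psi^*$ admit affine minorants, $y \mapsto \psi(y)$ is bounded below by a $\lambda_n$-integrable function (using $|y|\in L^1(Y)$) and $x \mapsto \psi^*(u(x))$ by a $\mu$-integrable function (using $u \in L^1(X,\mu)$), so $\int_Y \psi\,d\lambda_n$ and $\int_X \psi^*(u(x))\,d\mu$ are well defined in $(-\infty,+\infty]$. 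Then, for every $\gamma \in \Pi(\mu,\lambda_n)$,
$$ \int_{X\times Y} u(x)\cdot y\,d\gamma \;\le\; \int_{X\times Y}\bigl(\psi(y) + \psi^*(u(x))\bigr)\,d\gamma \;=\; \int_Y \psi\,d\lambda_n + \int_X \psi^*(u(x))\,d\mu, $$
and for $\gamma = \pi$ the pointwise inequality holds with equality $\pi$-a.e., so the same computation yields equality throughout (in particular the right-hand side is finite). Hence $\int u(x)\cdot y\,d\pi \ge \int u(x)\cdot y\,d\gamma$ for all $\gamma$, and by the reduction above $\pi$ minimises $I$.

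For part (ii), I would first show that the support $S := \operatorname{supp}\pi$ (a closed set of full $\pi$-measure, since $X$ is Polish and $Y\subset\RR^n$, so $X\times Y$ is second countable) is $c$-cyclically monotone. If not, there are points $(x_1,y_1),\dots,(x_N,y_N)\in S$ and a cyclic permutation $\sigma$ with $\sum_i c(x_i,y_{\sigma(i)}) < \sum_i c(x_i,y_i)$; using lower semicontinuity of $c$ to pass to small product neighbourhoods of these points and disintegrating $\pi$, one constructs a competitor $\pi'$ by removing a small common mass from $\pi$ on each neighbourhood and reinserting it with the $y$-coordinates cyclically permuted. This $\pi'$ still lies in $\Pi(\mu,\lambda_n)$ — the perturbation preserves both marginals — and has $I(\pi') < I(\pi)$, contradicting minimality. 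Expanding the quadratic cost exactly as in the first paragraph, $c$-cyclical monotonicity of $S$ is equivalent to classical cyclical monotonicity of $\Gamma := \{(y,u(x)) : (x,y)\in S\} \subset \RR^n\times\RR^n$. By Rockafellar's theorem there is a proper lower semicontinuous convex $\psi:\RR^n\to\ER$ with $\Gamma \subseteq \operatorname{graph}(\partial\psi)$, i.e. $u(x)\in\partial\psi(y)$ for every $(x,y)\in S$, hence for $\pi$ a.e. $(x,y)$.

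The quadratic reduction, Fenchel duality and Rockafellar's theorem are all standard; the one step that needs genuine care is the competitor construction showing that a minimiser of $I$ is concentrated on a $c$-cyclically monotone set \emph{while remaining inside} $\Pi(\mu,\lambda_n)$ — in particular verifying that the perturbed plan still has $\mu$ and $\lambda_n$ as its marginals, and, since $c$ is only lower semicontinuous, choosing the neighbourhoods so that the total cost strictly decreases. I would either carry out this perturbation argument explicitly or, as it is by now classical for lower semicontinuous costs, invoke the standard optimal-transport literature.
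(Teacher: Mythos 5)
Your part (i) is essentially the paper's argument: expand the quadratic cost, observe that the $|u(x)|^2$ and $|y|^2$ terms depend only on the marginals, and use the Fenchel equality $\psi(y)+\psi^*(u(x))=u(x)\cdot y$ on $\operatorname{supp}\pi$ versus the Fenchel inequality elsewhere; the paper packages this as an auxiliary functional $J(\gamma)=\int(\psi^*(u(x))+\psi(y)-u(x)\cdot y)\,d\gamma\ge 0=J(\pi)$ with $J=I+\mathrm{const}$, and your integrability remarks (affine minorants of $\psi$ and $\psi^*$, finiteness forced by the equality case for $\pi$) are a welcome bit of care that the paper passes over quickly.

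Part (ii) is where you genuinely diverge. The paper invokes Kantorovich duality for the lower semicontinuous cost $c$ to produce a $c$-concave pair $(\phi^c,\phi)$ achieving the optimal value, sets $\psi(y)=|y|^2/2-\phi(y)$, identifies $\psi^*(u(x))=|u(x)|^2/2-\phi^c(x)$, and concludes that $J(\pi)=0$, which by part (i) is exactly the subdifferential inclusion. You instead go through $c$-cyclical monotonicity of (a carrier of) $\pi$ and Rockafellar's theorem \cite[Theorem 24.9]{RTR} applied to $\Gamma=\{(y,u(x))\}$; for the quadratic cost these two notions of monotonicity coincide, so Rockafellar manufactures the potential $\psi$ directly. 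Both routes are standard and both ultimately lean on the same hard input. The one step in your sketch that does not work as written is the perturbation proof that a minimiser is concentrated on a $c$-cyclically monotone set: lower semicontinuity bounds $c$ from \emph{below} near the old pairs $(x_i,y_i)$, which is the direction you want there, but you also need $c$ bounded from \emph{above} near the new pairs $(x_i,y_{\sigma(i)})$, and mere lower semicontinuity gives no such control --- this is precisely why the naive argument is usually stated for continuous costs. The statement is nevertheless true for lower semicontinuous costs of finite total cost (here $I(\pi)\le\int_X|u|^2\,d\mu+\int_Y|y|^2\,d\lambda_n<\infty$), but the known proofs themselves pass through the duality theorem or an approximation argument (see the discussion around \cite[Theorem 2.3]{VILL}, or Ambrosio--Pratelli). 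So you correctly flagged the weak point; just be aware that ``invoking the literature'' there means importing essentially the same duality machinery the paper uses explicitly, rather than a genuinely more elementary alternative.
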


\section{Classes of functions without polar factorisations}
\label{s:2}

If a monotone rearrangement 
$u^{\#} = \nabla \psi$ (defined on 
$Y \subset \RR^n$)  is not almost injective on the 
complement of its level sets 
of positive measure, then it was proven in \cite[Theorem 2]{BD03}
that it
has a rearrangement $u$ which 
is almost injective on the complement of 
its level sets of positive measure,
and that $u$ does not have a polar factorisation (through $Y$). 
We extend this result in Theorem 1 to a larger class of 
functions (which are rearrangements of $u^{\#}$), those
that are almost countable to one on the complement of
their level sets of positive
measure. Moreover pairs $u^{\#}$, $u$ which satisfy these conditions 
(in a non-trivial way) 
for a restricted (Borel) set of values will also fail to have a 
polar factorisation, see Theorem 2. 
A key intermediate result, stated and proved in Lemma 1, is that a 
monotone rearrangement which is not almost injective on the
complement of its level sets
of positive size is not almost countable to one (on the 
complement of its level 
sets of positive size). 

\vspace{3mm}

We begin by
proving that every integrable $v : Y  \rightarrow \RR^n$ has 
a rearrangement $u$ which is almost $m$ to 1 on the
complement of its level sets
of positive measure. This demonstrates that Theorems \ref{thm:1} 
and \ref{thm:2} are a genuine extension of the results of 
\cite{BD03}. 
Other classes of almost countable to one functions 
can be constructed by analogous arguments. 

\vspace{3mm}

\begin{proposition} 
\label{prop:1}
Let $v : Y \rightarrow \RR^n$ be 
integrable, where $Y \subset \RR^n$ has finite Lebesgue measure.
Let $(X, \mu)$ be a measure-interval with 
$\mu (X) = \lambda_n (Y)$, and
let $m \in \NN$. Then $v$ has a rearrangement $u : X 
\rightarrow \RR^n$ which is almost $m$ to 1 on the complement 
of its level sets of positive measure.  
\end{proposition}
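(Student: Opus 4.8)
The plan is to build $u$ by hand. First peel off the part of $X$ on which $u$ is to have level sets of positive measure — where there is nothing to prove, since ``almost $m$ to $1$ on the complement of the level sets of positive measure'' disregards exactly those — and on the rest produce an $m$-to-$1$ rearrangement by an explicit ``$m$-fold'' construction.

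Let $\rho$ be the distribution of $v$, i.e.\ the push-forward under $v$ of Lebesgue measure on $Y$; it is a finite Borel measure on $\RR^n$ of total mass $L := \lambda_n(Y) = \mu(X)$. Decompose $\rho = \rho_a + \rho_c$ into its atomic and non-atomic parts, and note that the atoms of $\rho$ are exactly the values $c$ with $\lambda_n(v^{-1}(c)) > 0$, forming a countable set $S$. Since $(X,\mu)$ is a measure-interval we may identify it with $([0,L],\lambda_1)$; split $[0,L] = J_a \cup J_c$ into consecutive intervals of lengths $\rho_a(\RR^n)$ and $\rho_c(\RR^n)$, and on $J_a$ let $u$ take the value $c \in S$ on a subinterval of length $\rho(\{c\})$ (the lengths summing to $\rho_a(\RR^n)$). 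Any function on $X$ with distribution $\rho$ automatically lies in $L^1(X,\mu,\RR^n)$, as $\int_X |u|\,d\mu = \int_{\RR^n} |y|\,d\rho = \int_Y |v|\,d\lambda_n < \infty$, and its level sets of positive measure are precisely the $u^{-1}(c)$ with $c \in S$; since $\rho_c(S) = 0$, the complement of these level sets agrees with $J_c$ up to a null set. It therefore suffices to define $u$ on $J_c$ as a rearrangement with distribution $\rho_c$ that is almost $m$ to $1$.

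Translating, we may take $J_c = [0,L_c)$ with $L_c := \rho_c(\RR^n)$. Partition $[0,L_c)$ into the $m$ half-open intervals $I_j := [(j-1)L_c/m,\ jL_c/m)$ and let $\tau : [0,L_c) \to [0,L_c/m)$ be the folding map $\tau(t) = t - (j-1)L_c/m$ for $t \in I_j$; it is Borel, everywhere exactly $m$ to $1$, carries null sets to null sets, and satisfies $\lambda_1(\tau^{-1}(A)) = m\,\lambda_1(A)$. The measure $\rho_c/m$ is a finite non-atomic Borel measure on the Polish space $\RR^n$ of total mass $L_c/m$, hence a measure-interval, so there is a measure isomorphism (mod null sets) $w : [0,L_c/m) \to \RR^n$ whose push-forward of $\lambda_1$ is $\rho_c/m$. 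Set $u := w \circ \tau$ on $J_c$. Then for Borel $B$, $\mu(u^{-1}(B) \cap J_c) = m\,\lambda_1(w^{-1}(B)) = \rho_c(B)$, so $u$ on all of $X$ is a rearrangement of $v$. Finally $w$ is injective off some null set $N$, and $X_0 := J_c \setminus \tau^{-1}(N)$ has full measure in $J_c$; for each $c \in \RR^n$ the set $u^{-1}(c) \cap X_0$ equals $\tau^{-1}\big(w^{-1}(c) \setminus N\big)$, which is either empty or the $\tau$-preimage of a single point, hence has exactly $m$ elements whenever nonempty. Since $u$ has no level set of positive measure inside $J_c$, this exhibits $u$ as almost $m$ to $1$ on the complement of its level sets of positive measure.

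I do not expect a real obstacle here: the whole content is the folding map $\tau$ together with the existence of $w$, the latter being precisely the (standard, and already invoked) fact that a Polish space with a finite non-atomic Borel measure is a measure-interval. Care is needed only in the ``mod null'' bookkeeping — one must use that the measure isomorphisms employed are injective on full-measure sets, so that exact fibre cardinalities, and not merely measures, are transported correctly — and in checking the routine measurability of $u = w \circ \tau$.
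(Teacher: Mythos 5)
Your proof is correct and follows essentially the same route as the paper: split off the atoms of the push-forward measure $\rho$ (the values whose level sets have positive measure), and realise the nonatomic part as $m$ copies of a measure isomorphism onto $(\RR^n,\rho_c/m)$, using the standard fact that a Polish space with a finite nonatomic Borel measure is a measure-interval. The paper partitions the complement $Y_0$ directly into $m$ pieces of equal measure and glues $m$ measure-preserving bijections, which is exactly what your folding map $\tau$ composed with a single isomorphism $w$ accomplishes after transferring to an interval; the ``mod null'' care about injectivity on full-measure sets is needed in both versions.
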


\begin{proof} 
We use the methods of \cite[Lemma 3]{BD03}. 
Initially we restrict attention to finding  
$\hat{u} : Y  \rightarrow \RR^n$, a rearrangement of 
$v$ which is almost $m$ to 1 on the complement of its
level sets of positive measure.  
Let $Y_i = v^{-1}(\alpha_i)$ for 
$i \in I$ be the level sets of $v$ that have positive measure,
where $I$ is a countable index set, and write $Y_0 = Y \verb+\+  
\cup_{i \in I} Y_i$. If $Y_0$ has zero measure, all rearrangements 
have the desired property. Otherwise, define a Borel measure 
$\nu$ on $\RR^n$ by $\nu (B) = \lambda_n (v^{-1}(B))$ for 
Borel sets $B \subset \RR^n$. Now $\{ \alpha_i : i \in I\}$ is
the set of atoms of $\nu$. Let $\nu_0$ be the nonatomic part
of $\nu$. 
Possibly adding or subtracting sets of zero size, we can
partition $Y_0$ into $m$ disjoint $G_{\delta}$ sets   
of equal $\lambda_n$-measure: we label these 
sets $Y^j_0$ for $j =1,...,m$.  
Now for $j=1,...,m$,
$(Y_0^j, \lambda_n)$ and $(\RR^n \verb+\+ 
\{\alpha_i : i \in I\}, \nu_0/m)$ are isomorphic
measure spaces (in the sense of \cite[Definition 2.2]{BD98}). 
This follows from  \cite[p. 164, Proposition 33, and 
p. 409, Theorem 16]{HLR} and  
\cite[Lemma 2.3]{BD98}. For each $j =1,...,m$, choose a 
measure-preserving bijection $u_j :Y_0^j \rightarrow 
\RR^n \verb+\+ \{\alpha_i: i \in I\}$. Define    
$u_0 : Y_0 \rightarrow \RR^n \verb+\+ \{\alpha_i : i \in I\}$
by $u_0 = u_j$ on $Y^j_0$ for each $j = 1,...,m$. Now $u_0$ is
(almost) $m$ to 1 by construction. Writing $v_0$ for $v$ 
restricted to $Y_0$, we have that for Borel $B \subset \RR^n$,
$$ \lambda_n (u_0^{-1} (B)) = \sum_{j=1}^m \lambda_n (u_j^{-1}(B))
= \frac{1}{m} \sum_{j=1}^m \nu_0 (B) = \nu_0 (B) = \lambda_n 
(v_0^{-1}(B)). $$
It follows that $u_0$ and $v_0$ are rearrangements. Define $\hat{u} = 
u_0$ on $Y_0$, and $\hat{u} = \alpha_i$ on $Y_i$ for $i \in I$. Then 
$\hat{u}: Y 
\rightarrow \RR^n$ is a rearrangement of $v$ having the desired properties. 

Finally we note that $(X, \mu)$ and $(Y, \lambda_n)$ are isomorphic, 
so we can choose a measure-preserving transformation 
$\tau : X \rightarrow Y$. We can choose sets $\tilde{X} \subset X$, 
$\tilde{Y} \subset Y$ of full measure such that $\tau: \tilde{X} 
\rightarrow \tilde{Y}$ is a bijection. The above construction 
yields $\hat{u} : \tilde{Y} \rightarrow \RR^n$ which is almost 
$m$ to 1 on the complement of its level sets of positive measure.   
Now $u: X \rightarrow \RR^n$ 
defined by $u = \hat{u} \circ \tau$ satisfies the required 
conditions. 
\end{proof}

\vspace{3mm}

\noindent {\em Notation}. We say that a measure $\mu$ is 
absolutely continuous with respect to a measure $\nu$, and write 
$\mu \ll \nu$, if $\mu(E) =0$ for every $\nu$-measurable 
set $E$ for which $\nu (E) =0$.

\vspace{3mm}

\begin{lemma}
\label{lemma:1}
Let integrable $u^{\#} : Y \rightarrow
\RR^n$ be the restriction of the gradient of a proper 
lower semicontinuous convex function to a set $Y \subset 
\RR^n$ of finite positive Lebesgue measure, and suppose
that $u^{\#}$ restricted to the complement of
its level sets of positive measure is not almost injective. 
Let $Y_0$ be the complement of the level sets of 
$u^{\#}$ of positive measure. 
Then given a set $\tilde{Y} \subset Y_0$ of full measure, 
we can find a level set 
of $u^{\#}$ of zero measure  
which has uncountable intersection with $\tilde{Y}$. 
\end{lemma}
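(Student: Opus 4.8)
The plan is to argue by contradiction, exploiting the structure theory of gradients of convex functions. Since $u^{\#} = \nabla\psi$ a.e.\ for a proper l.s.c.\ convex $\psi$, the set where $\nabla\psi$ exists is a Borel set of full measure in $\mathrm{dom}(\partial\psi)$, and on the complement of the level sets of positive measure we may restrict further to $\tilde Y \subset Y_0$. The key fact I would invoke is that a convex function $\psi$ is differentiable on the complement of a countably $(n-1)$-rectifiable set; more usefully here, the subdifferential map $y \mapsto \partial\psi(y)$ is a monotone operator, and its graph has a well-understood layered structure. The failure of almost injectivity on $Y_0$ means there is a set of positive measure $E \subset Y_0$ and a ``partner'' set, so that distinct points of $E$ share a common value of $u^{\#}$; since the level sets in $Y_0$ all have zero measure, $E$ must be spread across uncountably many such level sets — but that is not yet the contradiction we want. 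Rather, I want to find a \emph{single} level set meeting $\tilde Y$ in an uncountable set.

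Here is the mechanism I would use. Non-almost-injectivity of $u^{\#}|_{Y_0}$ on any full-measure subset means: for every full-measure $\tilde Y \subset Y_0$, the restriction $u^{\#}|_{\tilde Y}$ is still not injective; so there exist $y_1 \ne y_2$ in $\tilde Y$ with $\nabla\psi(y_1) = \nabla\psi(y_2) =: p$. By convexity, the whole segment $[y_1,y_2]$ lies in $\partial\psi^*(p)$ (the subdifferential of the conjugate), and $\nabla\psi = p$ at every point of $[y_1,y_2]$ where $\nabla\psi$ exists. Thus the level set $(u^{\#})^{-1}(p)$, viewed in $Y$, contains (up to a null set) a segment, hence has positive \emph{or} merely contains a line segment's worth of points. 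The subtlety the lemma addresses is that this level set need not have positive $\lambda_n$-measure (it could be a lower-dimensional piece), yet it is forced to be uncountable — indeed to contain a nondegenerate segment — and I must show it meets $\tilde Y$ uncountably often, not just in the two points $y_1, y_2$.

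To upgrade from ``two points'' to ``uncountably many points'', I would set up a measure-exhaustion / Baire-category argument over the family of convex level sets. Consider the level sets $L_p = \{y \in Y : \nabla\psi(y) \text{ exists and equals } p\}$, which partition a full-measure subset of $Y$; each $L_p \cap Y_0$ is $\lambda_n$-null. Decompose $Y_0$ (a.e.) according to the dimension of the smallest affine subspace containing the face of $\psi^*$ over $p$; if uncountably many of the relevant $L_p$ were countable, I would derive that $\tilde Y$ — being built from these pieces — omits a set of positive measure, using that a null set which is a countable union over a countable index of countable sets is small, contradicting full measure of $\tilde Y$ against positive measure of $Y_0$. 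More directly: the hypothesis says $u^{\#}|_{\tilde Y}$ fails injectivity \emph{no matter how} we shrink $\tilde Y$ by a null set. Suppose every level set of $u^{\#}$ met $\tilde Y$ in only countably many points. Then I could well-order and transfinitely remove one representative from each ``collision'', i.e.\ build a null subset $N \subset \tilde Y$ whose removal makes $u^{\#}$ injective on $\tilde Y \setminus N$: since there are at most continuum-many level sets but each contributes countably many excess points, and — this is where the convex geometry is essential — the collision points lie on lower-dimensional faces whose union over all $p$ is a countably rectifiable, hence $\lambda_n$-null, set. Removing that null set contradicts non-almost-injectivity on $Y_0$. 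Hence some level set must meet $\tilde Y$ uncountably; and it has zero measure since it lies in $Y_0$.

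The main obstacle I anticipate is the bookkeeping in that last step: controlling the size of the union of collision points across all level sets so that it is genuinely $\lambda_n$-null. The clean way is to note that for $y \in Y_0$ lying in a level set with more than one point, $y$ belongs to a nondegenerate face of $\psi^*$, and the set of such $y$ is contained in a countable union of sets of the form $\{y : \mathrm{dist}(y, \text{affine hull of face}) = 0, \dim < n\}$, which is countably $(n-1)$-rectifiable and thus null — this is the convex-analytic input that makes the category/removal argument go through. With that in hand, the contradiction with the non-almost-injectivity hypothesis is immediate, and the produced level set is automatically of zero measure because it sits inside $Y_0$.
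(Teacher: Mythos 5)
There is a genuine gap, and it sits exactly where you flag the ``main obstacle'': the claim that the collision points ``lie on lower-dimensional faces whose union over all $p$ is a countably rectifiable, hence $\lambda_n$-null, set'' is false, and in fact contradicts the hypothesis of the lemma. What is countably $(n-1)$-rectifiable and null is the set where $\psi$ fails to be differentiable; the set of points of $Y_0$ lying on \emph{nondegenerate} faces of $\psi^*$ (equivalently, on level sets of $u^{\#}$ containing more than one point) is a different object, and its having positive measure is essentially equivalent to the standing assumption that $u^{\#}|_{Y_0}$ is not almost injective. Concretely, take $\psi(y)=y_1^2$ on $Y=[0,1]^2$: every level set of $\nabla\psi$ is a vertical segment, every point of $Y$ is a collision point, and the union of all these null faces is all of $Y$. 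For the same reason the transfinite-removal step fails: if each of uncountably many level sets meets $\tilde{Y}$ countably, deleting all but one representative per level set removes, not a null set, but (by Fubini, in the example above) a set of \emph{full} measure, since a measurable set meeting each vertical line in one point is null. So your contrapositive argument cannot close, and indeed the whole content of the lemma is that the situation ``every null level set meets $\tilde{Y}$ countably'' is impossible --- it cannot be disposed of by a cardinality/removal argument alone.

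The missing ingredient is a Fubini-type disintegration along the faces. The paper first extracts from $\tilde{Y}$ a set $Y_1$ of positive measure all of whose points lie on $k$-dimensional convex level sets for a fixed $1\leq k\leq n-1$ (possible precisely because the collision set has positive measure), then invokes the construction of Burton and Douglas \cite[Theorem 4]{BD03}: a homeomorphism $f$ from a compact $Y_2\subset Y_1$ with $\lambda_n(Y_2)>0$ onto a subset of a product $X\times\RR^k$, under which the pushforward of $\lambda_n$ is absolutely continuous with respect to a product measure $\Gamma\times\lambda_k$, the fibres $\{g\}\times\RR^k$ corresponding to single level sets. Since $\Gamma\times\lambda_k(f(Y_2))>0$, Fubini produces one fibre whose slice has positive $\lambda_k$-measure, hence is uncountable; pulling back gives a single zero-measure level set meeting $Y_2\subset\tilde{Y}$ uncountably. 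Your second paragraph correctly isolates the need to upgrade from two collision points to uncountably many, but without this quantitative product/absolute-continuity structure the upgrade does not follow.
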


\begin{proof}
We can choose a set $Y_1 \subset 
\tilde{Y}$ of positive measure such that for some integer 
$1 \leq k \leq n-1$, every point of $Y_1$ lies in a 
$k$-dimensional convex set that is a level set of $u^{\#}$. 
Then, as in the proof of Burton and Douglas \cite[Theorem 4]{BD03},
there is a homeomorphism $f$ from a compact set $Y_2 \subset Y_1$ 
with $\lambda_n (Y_2) > 0$, to a subset $f(Y_2)$ of 
$X \times \RR^k$, where $X$ is a Borel subset of $\RR^{n+1}$;
moreover the push-forward $\nu$ of $\lambda_n$ through $f$ satisfies
$\nu \ll \Gamma \times \lambda_k$, where $\Gamma$ is the finite 
Borel measure on $X$ described in Burton and Douglas  
\cite[Theorem 4]{BD03}. 

Now $\Gamma \times \lambda_k (f(Y_2))> 0$ by absolute 
continuity since $\nu (f(Y_2)) = \lambda_n (Y_2) > 0$,
and 
$$ \Gamma \times \lambda_k (f(Y_2)) = \int_X\int_{\RR^{k}}
1_{f(Y_2)} (g,l) d \lambda_k (l) d \Gamma (g), $$
so $\lambda_k (f(Y_2) \bigcap (\{g\} \times \RR^k)) > 0$ 
for some $g \in X$, thus $f(Y_2) \bigcap (\{g\} \times 
\RR^k)$ is uncountable. It follows that 
$Y_2 \bigcap f^{-1} (\{g\} \times \RR^k)$ is an uncountable
subset of a level set of $u^{\#}$. 
\end{proof}

\vspace{3mm}   

\noindent {\bf Proof of Theorem \ref{thm:1}}. 

\noindent  Suppose (for a contradiction) that $u$ has a
polar factorisation $u = u^{\#} \circ s$. Write $Y_0$ and $X_0$ for the 
complement of the level sets of positive measure of $u^{\#}$ and 
$u$ respectively. Modifying $s$ on a set of measure zero if 
necessary, we have that $u = u^{\#} \circ s$ where $s : X_0 
\rightarrow Y_0$ is measure-preserving.  
Choose a set 
${\tilde X} \subset X_0$ of full measure such that 

\noindent (i) $u(x) = u^{\#} \circ s (x)$ for every $x \in 
{\tilde X}$, and 

\noindent (ii) the intersection of every level set of $u$ with 
$\tilde{X}$ is countable. 

Now $s (\tilde{X}) \subset Y_0$ is a set of full measure;  
Lemma 1 yields the existence of a level set of $u^{\#}$ 
which has uncountable intersection with $s (\tilde{X})$. 
Write this intersection $\{ y_i \}_{i \in I}$ for some
uncountable index set $I$, where $y_i$ are distinct.  
For each $i$ we can choose $\beta_i \in \tilde{X}$ such that 
$s(\beta_i) = y_i$, where $\beta_i$ are distinct (because the 
$y_i$ are distinct). Now for $i,j \in I$, 
$$ u (\beta_i) = u^{\#} (s (\beta_i)) = u^{\#} (y_i) = 
u^{\#} (y_j) = u^{\#} (s (\beta_j)) = u (\beta_j), $$
from which we deduce the existence of a level set of $u$ with 
uncountable intersection with $\tilde{X}$. This contradicts the 
definition of $\tilde{X}$, and completes the proof. 
\hfill\qedsymbol

\vspace{3mm}

\noindent {\bf Proof of Theorem \ref{thm:2}} 

\noindent Suppose, for a contradiction, that $u$ does 
not have a polar factorisation through $Y$, that is there exists
a measure-preserving mapping $s: X \rightarrow Y$ such that 
$u = u^{\#} \circ s$. Write $X_0 = u^{-1} (B)$, $Y_0 =
(u^{\#})^{-1} (B)$, and denote the restriction of $u, u^{\#}$ 
to $X_0, Y_0$ respectively by $u_0, u_0^{\#}$. Modifying 
$s$ on a set of measure zero if necessary, we have 
$u_0 = u_0^{\#} \circ s$ where $s : X_0 \rightarrow Y_0$ is a 
measure-preserving mapping; moreover $u_0^{\#}$ is the monotone 
rearrangement (through $Y_0$) of $u_0$, so $u_0 = u_0^{\#} 
\circ s$ is a polar factorisation. Now Theorem 1 yields that such 
a polar factorisation cannot exist. This completes the proof. 
\hfill\qedsymbol

\section{Existence of polar inclusions}
\label{s:3}

Theorem 3 will be proved in this section. 
We begin by noting that the concept of polar inclusion is a 
natural extension of polar factorisation. If $u=u^{\#} \circ s$
for some measure-preserving mapping $s : X \rightarrow Y$, we can introduce
a push-forward measure $\pi$ by defining 
\begin{equation}
\label{eq:helen}
\pi (B) = (id \times s)_{\#}\mu (B) \equiv \mu \{ x \in X: (x, s(x)) \in B \}
\end{equation}
for measurable $B \subset X \times Y$. It is easily seen that 
$\pi \in \Pi (\mu, \lambda_n)$, and given $u^{\#} = \nabla \psi$
for some convex $\psi$, 
we have that $u(x) = \partial 
\psi (s(x))$ for $\mu$ a.e. $x \in X$, from which it follows that 
$u(x) \in \partial \psi (y)$ for $\pi$ a.e. $(x,y)$. Moreover the 
Monge-Kantorovich problem of minimising $I (\gamma)$ over 
$\gamma \in \Pi (\mu, \lambda_n)$ can be seen as an extension 
of \cite[Corollary 1]{BD03}: if we restrict to $\pi$ of the form 
(\ref{eq:helen}), the minimisation problem is exactly that of 
finding closest measure-preserving mappings (in $L^2$) to a square 
integrable function $u$, which corresponds to finding measure-preserving 
mappings (if any) which arise as polar factors.

The proof of Theorem 3 makes use of 
some standard ideas from the theory of 
optimal mass transfer problems. We introduce notation for concepts 
from convex analysis and its generalisation to $c$-concave 
analysis. 

\vspace{3mm}

\noindent {\em Notation}. If $\psi : \RR^n \rightarrow \ER$, then 
$\psi^* : \RR^n \rightarrow \ER$ denotes the (Legendre-Fenchel)
{\em conjugate convex function} of $\psi$, defined by 
$$ \psi^* (x) = \sup \{ x \cdot y - \psi (y) : y \in \RR^n \}. $$
Let $X$ and $Y$ be Polish spaces, 
and suppose that $c: X \times Y \rightarrow
[0, \infty) \cup \{+ \infty\}$ is lower semicontinuous. For 
$\phi : Y \rightarrow \RR \cup \{ -\infty \}$, define its 
$c$-transform $\phi^c$ and second $c$-transform $\phi^{cc}$ by 
$$ \phi^c (x) = \inf \{ c(x,y) - \phi (y): y \in Y \}, \ 
\phi^{cc} (y) = \inf \{ c(x,y) - \phi^c (x): x \in X \}. $$
We say that $\phi$ is $c$-{\em concave} if there exists some function 
$\theta$ such that $\phi = \theta^c$. For $c$-concave 
$\phi$ it is easily seen that $\phi^{cc} = \phi$, and   
$(\phi^c, \phi)$ is called a conjugate $c$-concave function pair. 
(Further details may be found in Villani \cite{VILL}, for example.)

\vspace{3mm}

\noindent{\bf Proof of Theorem \ref{thm:3}}

\noindent (i) Standard convex analysis (see for example Rockafellar 
\cite[Theorem 23.5]{RTR}) yields that $u(x) \in \partial \psi (y)$ 
for $\pi$ a.e. $(x,y)$ if and only if  
\begin{equation}
\label{eq:zoe}
\psi^* (u(x)) + \psi (y) - u(x) \cdot y = 0 \mbox{ $\pi$ a.e. 
$(x,y)$}. 
\end{equation}
Now (\ref{eq:zoe}) holds if and only if 
$$ \int_{X \times Y} \psi^* (u(x)) + \psi (y) - u(x) \cdot 
y d\pi (x,y) = 0, $$
noting that the integrand is non-negative. 

Let $\gamma \in \Pi (\mu, \lambda_n)$. Now 
$$ \psi^* (u(x)) + \psi (y) - u(x) \cdot y \geq 0 \mbox{ $\gamma$ a.e. 
$(x,y)$}. $$
It follows that 
$$ J (\gamma) \equiv \int_{X \times Y} \psi^* (u(x)) + 
\psi (y) - u(x) \cdot y d \gamma (x,y) \geq 0 = J(\pi). $$ 
Now 
\begin{eqnarray}
J (\gamma) & = & I (\gamma) + \int_{X \times Y} 
\psi^* (u(x)) + \psi (y) - \frac{|u(x)|^2}{2} -\frac{|y|^2}{2} 
d \gamma (x,y) \nonumber \\
 & = & I (\gamma) + \int_X \psi^* (u(x)) - \frac{|u(x)|^2}{2} d\mu (x) 
+ \int_Y \psi(y) - \frac{|y|^2}{2} d \lambda_n (y), \nonumber 
\end{eqnarray} 
where the second equality follows by noting that $\gamma \in 
\Pi (\mu, \lambda_n)$. It follows that any minimiser of $I$ is a 
minimiser of $J$ (and vice versa); we deduce that 
$\pi$ minimises $I (\gamma)$ over $\gamma \in \Pi (\mu, \lambda_n)$.  

\noindent (ii) Let $\pi$ be a minimiser of $I(\gamma)$ over 
$\Pi(\mu, \lambda_n)$. (At least one minimiser exists by Villani 
\cite[Theorem 1.3]{VILL}.) Now Kantorovich duality 
(see \cite[Theorem 1.3, Remark 2.40]{VILL}) yields that  
\begin{equation}
\label{eq:adam}
\int_{X \times Y} \frac{|u(x) -y|^2}{2} d \pi (x,y) = 
\int_X \phi^c (x) d\mu (x) + \int_Y \phi(y) d \lambda_n (y)
\end{equation}
for a $c$-concave pair $(\phi^c, \phi)$. Now (\ref{eq:adam}) 
implies that 
\begin{equation}
\label{eq:claire}
\int_{X \times Y} \frac{|y|^2}{2} - \phi(y) + \frac{|u(x)|^2}{2}
- \phi^c (x) - u(x) \cdot y d\pi (x,y) = 0. 
\end{equation}
Define $\psi (y) = |y|^2/2 - \phi(y)$. Then it may be shown that 
$\psi^* (u(x)) = |u(x)|^2/2 - \phi^c (x)$, and that 
$\psi$ is the supremum of a family of continuous affine 
functions, whence convex and lower semicontinuous (by 
\cite[Proposition 3.1, p. 14]{ET} for example).   
Moreover the left hand side of (\ref{eq:claire})
is $J (\pi)$. Now the proof of (i) yields that $u(x) \in \partial 
\psi (y)$ for $\pi$ a.e. $(x,y)$. 
\hfill\qedsymbol 

\section*{Acknowledgements}
The author would like to acknowledge stimulating discussions 
with G.R. Burton.

\bibliographystyle{plain}

\end{document}